\documentclass[11pt]{article}%
\usepackage[letterpaper, portrait, margin=1in, headheight=0pt]{geometry}
\usepackage{mathtools}
\usepackage{enumerate}
\usepackage{amsmath,amsthm,amssymb,amsfonts}
\usepackage[shortlabels]{enumitem}
\usepackage{mathrsfs}
\usepackage{tikz-cd}
\usepackage{multicol}
\setlength{\columnsep}{1.0cm}
\usepackage[title]{appendix}
\usetikzlibrary{quotes,angles}

\DeclareMathOperator{\nil}{nil}

\newcommand{\N}{\mathbb{N}}

\newcommand{\C}{\mathbb{C}}

\newcommand{\cZ}{\mathcal{Z}}

\DeclareMathOperator{\spa}{span}

\newtheorem{thm}[subsection]{Theorem}
\newtheorem{lem}[subsection]{Lemma}

\newtheorem{prop}[subsection]{Proposition}
\newtheorem{cor}[subsection]{Corollary}

\theoremstyle{definition}

\newtheorem{ex}[subsection]{Example}

\theoremstyle{remark}

\title{On Nilpotent Triassociative Algebras}
\author{Sona Baghiyan, Liam Gallagher, and Erik Mainellis}
\date{}

\begin{document}

\maketitle

\begin{abstract}
    The class of associative trialgebras, also known as triassociative algebras, is characterized by three multiplications and eleven relations that generalize associativity. In the current paper, we present a study of nilpotent triassociative algebras. After some examples and basic results, we provide a low-dimensional classification, a general monomial form, and an analogue of Engel's Theorem. The main result shows that one of the three multiplication operations behaves differently than the other two. In particular, if this structure alone is nilpotent, then both other multiplications are nilpotent. The converse is not true. Furthermore, the former is nilpotent if and only if the entire algebra is nilpotent.

    \noindent \textbf{Keywords:} Loday, trialgebras, diassociative, extensions

    \noindent \textbf{MSC 2020:} 16W99, 16N40, 19C09
\end{abstract}

\section{Introduction}
In \cite{loday trialgebras}, Loday and Ronco showed that the family of chain modules over the standard simplices can be equipped with an operad structure. Algebras over this operad are called \textit{triassociative algebras} (or associative trialgebras), and are characterized by three multiplication operations and eleven defining identities. They generalize \textit{diassociative algebras} (or associative dialgebras), which were introduced by Loday in \cite{loday dialgebras}, and have since attracted interest as algebraic objects (see \cite{mainellis uni,mainellis nilp,mainellis ext}, for example). Explicitly, a diassociative algebra $(D,\vdash,\dashv)$ consists of a vector space $D$ equipped with two bilinear operations $\vdash,\dashv:D\times D\xrightarrow{} D$ that satisfy
\begin{align*}
    \text{A1}&~~~~~ (x\vdash y)\vdash z = x\vdash(y\vdash z)  & \text{A2}&~~~~~ (x\dashv y)\dashv z = x\dashv(y\dashv z) \\ \text{D1} &~~~~~  (x\dashv y)\vdash z = x\vdash (y\vdash z) & \text{D2}&~~~~~ (x\dashv y)\dashv z = x\dashv (y\vdash z) \\ \text{S1}&~~~~~
    (x\vdash y)\dashv z = x\vdash (y\dashv z) & \text{self}
\end{align*}
for all $x,y,z\in D$. Note that there is a vertical symmetry between the two columns that reflects the order of operations and swaps $\vdash$ and $\dashv$. In this sense, S1 is self-symmetric. We also note that A1 and A2 yield associative algebras $(D,\vdash)$ and $(D,\dashv)$ respectively. The triassociative axioms build on these ones and introduce a third operation. In particular, a triassociative algebra $(T,\vdash,\dashv,\perp)$ is a vector space $T$ equipped with three bilinear products $\vdash,\dashv,\perp:T\times T\xrightarrow{} T$ such that $(T,\vdash,\dashv)$ is a diassociative algebra and
\begin{align*}
\text{T1}&~~~~~
(x\perp y)\vdash z = x\vdash(y\vdash z) & \text{T2}&~~~~~ (x\dashv y)\dashv z = x\dashv(y\perp z) \\ 
\text{T3}&~~~~~ (x\vdash y)\perp z = x\vdash(y\perp z) & \text{T4}&~~~~~ (x\perp y)\dashv z = x\perp (y\dashv z) \\
\text{S2}&~~~~~(x\dashv y)\perp z = x\perp(y\vdash z) & \text{self} \\
\text{A3}&~~~~~(x\perp y)\perp z = x\perp(y\perp z) & \text{self}
\end{align*}
for all $x,y,z\in T$. We note that there is again a symmetry between the columns that reflects the order of operations and swaps $\vdash$ and $\dashv$. The third operation $\perp$ is, as it appears visually, vertically self-symmetric. Moreover, $(T,\perp)$ forms an associative algebra.

In the present paper, we are interested in \textit{nilpotent} algebras. This notion is well-known in the context of associative algebras. Specifically, an associative algebra $A$ is nilpotent if $A^n=0$ for some $n\in \N$. Here, $A^n$ denotes the ideal generated by all $n$-products in $A$. In \cite{basri}, the authors introduce a definition of nilpotent diassociative algebras that is naturally more complicated than that of associative algebras. They find, however, that there are nice relations both among the possible definitions of nilpotency, as well as among the nilpotency of the interconnected multiplication structures. Alternatively, an element $a\in A$ is called nilpotent if there is some $n$ for which $a^n = 0$. It is clear that, if $A$ is nilpotent, then any element of $A$ is nilpotent. Conversely, as discussed in the introduction of \cite{herstein}, the following is also true.
\begin{thm}\label{wedderburn} A finite-dimensional associative algebra with nilpotent basis elements is nilpotent.\end{thm}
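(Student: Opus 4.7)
The plan is to route through the Jacobson radical and invoke the Wedderburn--Artin structure theorem. Let $J = J(A)$ denote the Jacobson radical of $A$; since $A$ is finite-dimensional, $J$ is a nilpotent ideal and $A/J$ is semisimple. The images of the given nilpotent basis elements span $A/J$ and remain nilpotent, so it suffices to show that a nonzero finite-dimensional semisimple associative algebra cannot be spanned by nilpotent elements. Once this is established, it forces $A/J = 0$, so $A = J$ is nilpotent.

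For the core claim, I would decompose $A/J \cong \prod_{i=1}^{r} M_{n_i}(D_i)$ by Wedderburn--Artin and project onto a single simple factor $B = M_n(D)$. Here one can use a trace functional on $B$: every nilpotent element has vanishing trace since its eigenvalues over the algebraic closure of the ground field are all zero, whereas $B$ contains elements of nonzero trace (such as a diagonal matrix unit). Hence the span of nilpotents lies in a proper hyperplane, contradicting the assumption that $B$ is spanned by nilpotents and forcing $B = 0$. Taking this across all simple factors yields $A/J = 0$, and hence $A$ is nilpotent.

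The main obstacle will be making the trace argument uniformly valid across ground fields and division algebras: in small characteristics the standard matrix trace can vanish on the identity, requiring either passage to the reduced trace or a conjugation-invariance argument showing that the span of nilpotents is contained in a proper $\mathrm{GL}_n(D)$-invariant subspace of $B$. Since the present paper cites \cite{herstein} for this theorem, however, the classical Wedderburn argument may simply be invoked as a black box in lieu of unpacking these technicalities.
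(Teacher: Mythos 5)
Your proposal is essentially correct, but it is worth noting that the paper itself does not prove this theorem at all: it is quoted as a known classical fact, with the introduction pointing to the discussion in \cite{herstein}, so there is no in-paper argument to compare against. What you have written is, in outline, the standard proof of that classical fact: pass to $A/J(A)$ (using that the radical of a finite-dimensional algebra is nilpotent and that homomorphic images of nilpotents span the quotient), decompose the semisimple quotient by Wedderburn--Artin, and kill each simple factor $M_n(D)$ with a trace argument. The one technicality you flag is real but resolves cleanly: rather than worrying about the trace of the identity in small characteristic, regard $M_n(D)$ as central simple over the center $K$ of $D$ (it is still $K$-spanned by nilpotents, since it is already $F$-spanned by them) and extend scalars to $\overline{K}$, obtaining a full matrix algebra $M_{nm}(\overline{K})$ spanned by nilpotent matrices; the ordinary matrix trace vanishes on nilpotents but takes the value $1$ on a matrix unit $E_{11}$ in every characteristic, which gives the contradiction without any appeal to the trace of the identity or to separability of $K/F$. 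With that point made explicit, your argument is a complete and self-contained proof, which is arguably a service the paper does not provide; the cost is that it invokes the radical and Wedderburn--Artin machinery, whereas the paper deliberately treats the statement as a black box imported from the associative literature.
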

This result that is not true in the infinite-dimensional case. In the context of Lie algebras, a similar problem is addressed by the famous \textit{Engel's Theorem} (see \cite{humph} for details).

The objective of the current paper is to study nilpotent triassociative algebras and to investigate nilpotency relations among their multiplication structures. We begin with definitions and consider three different possible definitions of nilpotency for triassociative algebras. It is shown that these notions are equivalent. We then obtain a selection of Lie-type results and consider  several explicit examples of nilpotent triassociative algebras. We classify all 1-dimensional triassociative algebras and then use central extensions to obtain the 2-dimensional nilpotent classification. Finally, we study the interplay of the multplication structures, showing that $\perp$ is nilpotent if and only if the entire algebra is nilpotent. If $\perp$ is nilpotent, then both $\vdash$ and $\dashv$ are also nilpotent. The converse, however, is not true. Along the way, we obtain analogues of Engel's Theorem in the di- and tri-associative contexts, as well as a general monomial form for products in triassociative algebras. We briefly consider the notion of solvability.

\section{Nilpotent Triassociative Algebras}
Let $(T,\vdash,\dashv,\perp)$ be a triassociative algebra. We say that a subspace $S$ of $T$ is a \textit{subalgebra} of $T$ if $x\ast y \in S$ for any $x,y\in S$. Here, we let $\ast$ range over $\{\vdash,\dashv,\perp\}$. An \textit{ideal} $I$ in $T$ is a subalgebra such that $i\ast x$ and $x\ast i$ are contained in $I$ for any $i\in I$ and $x\in T$. We define the \textit{center} of $T$, denoted $\cZ(T)$, in the Lie sense. In particular, $\cZ(T)$ is the ideal of all $z\in T$ such that $z\ast x = x\ast z = 0$ for all $x\in T$. If $T = \cZ(T)$, we say that $T$ is \textit{abelian}. Equivalently, this means that any multiplication in $T$ is zero. A \textit{homomorphism} of triassociative algebras $T_1$ and $T_2$ is a linear map $\phi:T_1\xrightarrow{} T_2$ such that, for any $x,y\in T_1$, $\phi$ preserves the multiplication structure of the domain, i.e. such that $\phi(x\ast y) = \phi(x)\ast \phi(y)$.

Consider two subalgebras $A$ and $B$ of a triassociative algebra $T$ and let $A\ast B$ be the the span of all products $a\ast b$, where $a\in A$, $b\in B$, and $\ast\in \{\vdash,\dashv,\perp\}$. Generalizing the notation from \cite{basri}, let $A\lozenge B$ denote the ideal $A\vdash B + A\dashv B + A\perp B$ in $T$. To define nilpotency for triassociative algebras, we consider three descending sequences of ideals.
\begin{enumerate}
    \item[] $T^{<1>} = T$, $T^{<n+1>} = T^{<n>}\lozenge T$,
    \item[] $T^{\{1\}} = T$, $T^{\{n+1\}} = T\lozenge T^{\{n\}}$,
    \item[] $T^1 = T$, $T^{n+1} = T^1\lozenge T^n + T^2\lozenge T^{n-1} + \cdots + T^n\lozenge T^1$.
\end{enumerate} The first two seem to naturally lend themselves to notions of \textit{right} and \textit{left} nilpotency, while the third appears to account for every possible product. However, these notions are equivalent. To see this, we first state the following lemma, which holds similarly to Lemma 1 from \cite{basri} via induction on $n$.

\begin{lem}\label{m+n}
    Let $T$ be a triassociative algebra. For any $m,n\in \N$, one has $T^{\{m\}}\lozenge T^{\{n\}} \subseteq T^{\{m+n\}}$ and $T^{<m>}\lozenge T^{<n>}\subseteq T^{<m+n>}$.
\end{lem}

\begin{thm}
    For a triassociative algebra $T$, one has $T^{< n>} = T^{\{n\}} = T^n$ for all $n\in \N$.
\end{thm}

\begin{proof}
        To show that $T^{<n>}=T^n$, we proceed by induction on $n$. By definition, $T^{<1>}=T^1$ forms a base case. Now assume that $T^{<m>}=T^{m},$ for all $m\leq n$. The inclusion $T^{<n+1>}\subseteq T^{n+1}$ is clear. In the other direction, one computes \begin{align*}
            T^{n+1} &= T^1\lozenge T^{n}+T^2\lozenge T^{n-1}+\cdots+T^n\lozenge T^1 \\ &= T^{<1>}\lozenge T^{<n>}+T^{<2>}\lozenge T^{<n-1>}+\cdots+T^{<n>}\lozenge T^{<1>} \\ &\subseteq T^{<n+1>}
        \end{align*} by Lemma \ref{m+n}. Thus, $T^{n+1}=T^{<n+1>}$. The equality $T^{\{n\}}=T^n$ follows similarly.
\end{proof}

We say that a triassociative algebra $T$ is \textit{nilpotent of class $n$} if $T^{n}\neq 0$ and $T^{n+1}=0$, and use the notation $\nil(T)=n$ to denote nilpotency class. We now express several analogues of well-known results from the context of nilpotent Lie algebras. It is not difficult to transfer them to the tri-algebraic setting, but we will prove Propositions \ref{ideal nilp} and \ref{sum of nilpotent ideals} as examples.

\begin{prop}
    Subalgebras and homomorphic images of nilpotent triassociative algebras are nilpotent.
\end{prop}

\begin{prop}\label{ideal nilp}
    If $I$ is an ideal of a triassociative algebra $T$ such that $I\subseteq \cZ(T)$ and $T/I$ is nilpotent, then $T$ is nilpotent.
\end{prop}

\begin{proof}
    Since $T/I$ is nilpotent, there is some $n\in \N$ such that $(T/I)^n = 0$. In other words, $T^n$ is contained in $I$. We compute \begin{align*}
        T^{2n} &= T^1\lozenge T^{2n-1} + T^2\lozenge T^{2n-2} + \cdots + T^{2n-1}\lozenge T^1 \\ &\subseteq T^1\lozenge T^n + T^n\lozenge T^1 \\ &\subseteq T\lozenge I + I\lozenge T
    \end{align*} since $T^m$ is contained in $T^k$ for any $m>k$. Since $I\subseteq \cZ(T)$, we obtain $T^{2n}=0$.
\end{proof}

\begin{cor}
    A triassociative algebra $T$ is nilpotent if and only if $T/\cZ(T)$ is nilpotent.
\end{cor}

\begin{prop}\label{sum of nilpotent ideals}
    Sums of nilpotent ideals in a triassociative algebra are nilpotent.
\end{prop}

\begin{proof}
    Let $I$ and $J$ be nilpotent ideals of a triassociative algebra $T$ and $m = \nil(I) + \nil(J) + 1$. One computes \[(I+J)^m \subseteq I^{\nil(I)+1} + J^{\nil(J)+1} = 0\] and so $I+J$ is nilpotent of class less than or equal to $\nil(I)+\nil(J)$.
\end{proof}

\section{Examples}
In this section, we give examples of nilpotent triassociative algebras, including some low-dimensional classification over the complex field.

\begin{ex}
    Consider the vector space $V$ with basis $\{x,y,z\}$ and only nonzero multiplications given by \begin{align*}
        & x\vdash x = y, && x\dashv x = y, && x\perp x = y, \\
        & x\vdash y = z, && x\dashv y = z, && x\perp y = z, \\
        & y\vdash x = z, && y\dashv x = z, && y\perp x = z.
    \end{align*} Note that any 3-product involving $y$ or $z$ is zero. Therefore, any nontrivial triassociative axiom is determined by the form $x\ast_1 (x\ast_2 x) = (x\ast_3 x)\ast_4 x$ for certain operations $\ast_i\in \{\vdash,\dashv,\perp\}$. Regardless of the operations, this equation simplifies to $x\ast_1 y = y\ast_4 x$, which simplifies to $z=z$. Thus, $(V,\vdash,\dashv,\perp)$ forms a triassociative algebra. We note that $V^2 = \spa\{y,z\}$, $V^3 = \spa\{z\}$, and $V^4 = 0$. Therefore, $V$ is nilpotent of class 3.
\end{ex}

\begin{ex}\label{extra special}
    Consider the vector space $W$ with basis $\{w_1,w_2,\dots,w_n,z\}$ and only nonzero multiplications given by $w_i\vdash w_i = w_i\dashv w_i = z$ for $i=1,2,\dots,n$ (the $\perp$ structure is abelian). This is trivially a triassociative algebra since every 3-product is zero. In particular, $W^2 = \spa\{z\}$ and $W^3=0$, and so $(W,\vdash,\dashv,\perp)$ is nilpotent of class 2.
\end{ex}

As in other algebraic contexts, nilpotent triassociative algebras can be constructed via central extensions of lower-dimensional nilpotent algebras (see \cite{mainellis fs} for more on extension theory). In order to classify all 2-dimensional nilpotent triassociative algebras in this way, we need to know which 1-dimensional triassociative algebras are nilpotent. Our approach to this classification is similar to the diassociative method of \cite{basri 3-D}, in which an associative structure on one of the operations is fixed and the greater dialgebra structure is determined from there. We first state the well-known classification of 1-dimensional associative algebras over a complex field.

\begin{thm}\label{as 1}
Any 1-dimensional complex associative algebra is either abelian or isomorphic to an algebra with multiplication $xx = x$.
\end{thm}

\begin{thm}\label{1-d classification}
    Any 1-dimensional complex triassociative algebra is isomorphic to one of the following (pairwise non-isomorphic) triassociative algebras.
    \begin{enumerate}
        \item[] $T_1^1:$ abelian.
        \item[] $T_1^2: x\perp x = x$.
        \item[] $T_1^3: x\vdash x = x\dashv x = x\perp x = x$.
    \end{enumerate}
\end{thm}

\begin{proof}
    Let $\{x\}$ be a basis for a triassociative algebra $(T,\vdash,\dashv,\perp)$. We begin by choosing an associative structure on $(T,\vdash)$, which must either be abelian or such that $x\vdash x = x$. We first assume that it is abelian and denote $x\dashv x = \alpha x$ and $x\perp x = \beta x$ for some $\alpha,\beta\in \C$. The restrictions on these coefficients can be found by plugging $x$'s into the axioms of triassociative algebras. We first note that axioms A1, D1, and S1 yield only $0=0$, as both sides of each equation have a $\vdash$ multiplication. Computing both sides of $(x\dashv x)\dashv x = x\dashv (x\vdash x)$ (D2), however, yields $\alpha^2x = 0$, which means that $\alpha$ must be zero, and so $x\dashv x = 0$. Thus, axioms A2, T1, T2, T3, T4, and S2 are all trivial since both sides of each equation have either a $\vdash$ or $\dashv$. Axiom A3, however, yields $\beta^2x = \beta^2x$, and so no restrictions are placed on $\beta$. If $\beta=0$, we obtain $T\cong T_1^1$. If $\beta\neq 0$, a change of basis yields $T_1^2$.

Now assume that $x\vdash x = x$ and let $x\dashv x = \alpha x$ and $x\perp x = \beta x$ once more. Axioms S1 and A1 yield only trivial equalities. Let us consider D1. Computing both sides of $(x\dashv x)\vdash x = x\vdash(x\vdash x)$ yields $\alpha x = x$, and we obtain $\alpha=1$. From $(x\dashv x)\dashv x = x\dashv (x\perp x)$, we obtain $x=\beta x$, and so $\beta =1$. All other axioms give nothing new, and so $T$ must be isomorphic to $T_1^3$.
\end{proof}

\begin{cor}
    The only nilpotent 1-dimensional complex triassociative algebra is the abelian case.
\end{cor}

\begin{thm}\label{2-D nilp}
    Any 2-dimensional nilpotent complex triassociative algebra is isomorphic to one of the following (pairwise non-isomorphic) triassociative algebras.
    \begin{enumerate}
        \item[] $T_2^1$: abelian.
        \item[] $T_2^2(\alpha,\beta)$: $x\vdash x = z$, $x\dashv x = \alpha z$, $x\perp x=\beta z$ for $\alpha,\beta \in \C$.
    \end{enumerate}
\end{thm}

\begin{proof}
    Let $X = \spa\{x\}$ and $Z=\spa\{z\}$ be 1-dimensional abelian algebras and consider the central extension \[0\xrightarrow{} Z\xrightarrow{} T\xrightarrow{} X\xrightarrow{} 0\] of $Z$ by $X$. Since $X = T_1^1$ is the only 1-dimensional nilpotent triassociative algebra, 2-dimensional nilpotent triassociative algebras are completely determined by the possible structures on $T$. Since the extension is central, the general multiplication table for $T$ is completely determined by products that involve $x$. Moreover, all of these products fall in $Z$ since $X$ is abelian. Denote \begin{align*}
        x\vdash x = \gamma z, && x\dashv x = \alpha z, && x\perp x = \beta z.
    \end{align*} By observation, any product of three or more elements is zero. Therefore, all triassociative axioms are trivial, and so the three underlying associative structures on $T$ do not interact in a meaningful way. If each of $\gamma$, $\alpha$, and $\beta$ is zero, we obtain $T_2^1$. Without loss of generality, suppose $\gamma \neq 0$. We obtain $T_2^2(\alpha,\beta)$ via a simple change of basis.
    
    To conclude this proof, we will show that any pair $(\alpha,\beta)$ of complex numbers defines a unique isomorphism class $T_2^2(\alpha,\beta)$. Consider another pair of complex numbers $(\alpha',\beta')$ and let $\{x',z'\}$ be a basis for $T_2^2(\alpha',\beta')$, where \begin{align*}
        x'\vdash x' = z', && x'\dashv x' = \alpha'z', && x'\perp x' = \beta'z'.
    \end{align*} Suppose there is an isomorphism \[f:T_2^2(\alpha,\beta)\xrightarrow{} T_2^2(\alpha',\beta')\] defined by \begin{align*}
        f(x) = \delta x' + \epsilon z', && f(z) = \xi x' + \gamma z'
    \end{align*} for some $\delta,\epsilon,\xi,\gamma\in\C$. We may assume that $\xi = 0$ since \[f(z)\in \cZ(T_2^2(\alpha',\beta')).\] From the equality $f(x\vdash x) = f(x)\vdash f(x)$, we obtain $\gamma = \delta^2$. From $f(x\dashv x) = f(x)\dashv f(x)$, we obtain $\gamma\alpha = \delta^2\alpha'$. Finally, the equality $f(x\perp x) = f(x)\perp f(x)$ yields $\gamma\beta = \delta^2\beta'$. Since $f$ is an isomorphism, $\gamma\neq 0$. Thus, $\alpha = \alpha'$ and $\beta = \beta'$.
\end{proof}

Note that any 2-dimensional nilpotent associative algebra with basis $\{x,z\}$ is either abelian or isomorphic to an algebra with multiplication defined by $xx = z$. This is a special case of the diassociative classification (see \cite{basri}), where any 2-dimensional algebra is either abelian or isomorphic to a structure with only nonzero multiplications given by $x\vdash x = z$ and $x\dashv x = \alpha z$ for $\alpha\in \C$. Naturally, this is a special case of Theorem \ref{2-D nilp}.

\section{Interplay of Multiplications}
The main objective of this section is to prove that a triassociative algebra is nilpotent if and only if its $\perp$ structure is nilpotent. To this end, we first need to establish a general monomial form in the triassociative setting. We take our inspiration from the form \[(x\vdash\cdots\vdash x)\vdash(x\dashv \cdots\dashv x)\perp (x\dashv \cdots\dashv x)\perp\cdots\perp (x\dashv \cdots\dashv x)\] that appears in the proof of Proposition 1.9 from \cite{loday trialgebras}. This form applies only to a structure with one generator. In Lemma 3 of \cite{basri}, however, a general $n$-product $x = x_1\ast x_2\ast\cdots \ast x_n$ in a diassociative algebra $(D,\vdash,\dashv)$ is considered, where each $\ast$ falls in $\{\vdash,\dashv\}$. It is shown that, for any choice of parentheses, $x$ can be written in the form
\[x_1\vdash x_2\vdash \cdots \vdash x_{m-1}\vdash x_m\dashv x_{m+1} \dashv \cdots \dashv x_n\] for $1\leq m\leq n$. This is used to prove that, for any diassociative algebra $(D,\vdash,\dashv)$, the following statements are equivalent.
    \begin{enumerate}
    \item The associative algebra $(D,\vdash)$ is nilpotent.
    \item The associative algebra $(D,\dashv)$ is nilpotent.
    \item The diassociative algebra $(D,\vdash,\dashv)$ is nilpotent.
\end{enumerate}

Using this equivalence, we easily obtain the following analogue of Engel's Theorem for finite-dimensional diassociative algebras. Here, $\lambda_d^{\vdash}$ is the adjoint operator defined by $\lambda_d^{\vdash}(x) = d\vdash x$ for all $d,x\in D$. Such an operator is called \textit{nilpotent} if \[\underset{n}{\underbrace{\lambda_d^{\vdash}\circ \lambda_d^{\vdash}\circ \cdots \circ \lambda_d^{\vdash}}} = 0\] for some $n\in N$.

\begin{cor}\label{engels for di} A finite-dimensional diassociative algebra $D$ is nilpotent if and only if $\lambda_d^{\vdash}$ is a nilpotent endomorphism of $D$ for any $d\in D$.
\end{cor}

\begin{proof} The forward direction is straightforward. Conversely, since $\lambda_d^{\vdash}$ is nilpotent, there is some $n\in \N$ such that \[\underset{n}{\underbrace{\lambda_d^{\vdash}(\lambda_d^{\vdash}(\cdots (\lambda_d^{\vdash}}}(d))\cdots )) = 0.\] In other words, we have \[\underset{n}{\underbrace{d\vdash(d\vdash (\cdots (d}}\vdash d)\cdots ))=0.\] Therefore, any element $d\in D$ is nilpotent under the associative $\vdash$ structure. By Theorem \ref{wedderburn}, the associative algebra $(D,\vdash)$ must be nilpotent since it is finite-dimensional. By the above discussion, this implies that $(D,\dashv,\vdash)$ is nilpotent as a diassociative algebra.
\end{proof}

The following proposition establishes our desired monomial form. We then use this to investigate nilpotency relations among the triassociative multiplication structures.

\begin{prop}\label{trias product form} Let $(T,\vdash,\dashv,\perp)$ be a triassociative algebra and consider a product $x=x_1\ast x_2\ast \cdots \ast x_n$ in $T$, with any choice of parentheses, where each $\ast$ falls in $\{\vdash,\dashv,\perp\}$. Then $x$ can be written in the form
\[(x_1\vdash x_2\vdash\cdots\vdash x_{n_1-1})\vdash(x_{n_1}\dashv \cdots\dashv x_{n_2-1})\perp (x_{n_2}\dashv \cdots\dashv x_{n_3-1})\perp\cdots\perp (x_{n_r}\dashv \cdots\dashv x_n)\] for $1\leq n_1\leq n_2\leq \cdots \leq n_r \leq n$.
\end{prop}

\begin{proof}
As a base case for induction, we consider the triassociative three product $x_1\ast x_2\ast x_3$. There are two possible choices of parentheses. In the form $(x_1\ast x_2)\ast x_3$, the following cases may occur:
\begin{multicols}{3}
    \begin{enumerate}
        \item $(x_1\vdash x_2)\vdash x_3$,
        \item $(x_1\vdash x_2)\dashv x_3$,
        \item $(x_1\vdash x_2)\perp x_3$,
        \item $(x_1\dashv x_2)\vdash x_3$,
        \item $(x_1\dashv x_2)\dashv x_3$,
        \item $(x_1\dashv x_2)\perp x_3$,
        \item $(x_1\perp x_2)\vdash x_3$,
        \item $(x_1\perp x_2)\dashv x_3$,
        \item $(x_1\perp x_2)\perp x_3$.
    \end{enumerate}
\end{multicols} \noindent Cases 1, 5, 6, and 9 are already in the desired form. Cases 2, 3, 4, 7, 8 follow from the identities S1, T3, D1, T1, T4 respectively. The cases for the second choice of parentheses $x_1\ast(x_2\ast x_3)$ follow similarly from the triassociative identities. Thus, all 3-products in $T$ can be expressed in the desired form.

Now assume that the lemma holds for any product $x$ of $n$ or fewer elements. An $(n+1)$-product can be written in one of the following forms:
\begin{multicols}{3}
\begin{enumerate}
    \item $x\dashv t$,
    \item $t\dashv x$,
    \item $x\vdash t$,
    \item $t\vdash x$,
    \item $x\perp t$,
    \item $t\perp x$
\end{enumerate}
\end{multicols} \noindent for some $t\in T$. By the inductive hypothesis, we can rearrange $x$ into the form \[(x_1\vdash x_2\vdash\cdots\vdash x_{n_1-1})\vdash(x_{n_1}\dashv \cdots\dashv x_{n_2-1})\perp (x_{n_2}\dashv \cdots\dashv x_{n_3-1})\perp\cdots\perp (x_{n_r}\dashv \cdots\dashv x_n)\] for $1\leq n_1\leq n_2\leq \cdots \leq n_r \leq n$. We note that cases 1, 4, and 5 are done. For case 2, one computes \begin{align*}
    x &= t\dashv((x_1\vdash x_2\vdash \cdots \vdash x_{n_1}) \dashv x') \\ &= (t\dashv x_1\dashv x_2\dashv \cdots \dashv x_{n_1})\dashv x'
\end{align*} via repeated application of D2, where $x'$ denotes the rest of the product $x$ that follows $x_{n_1}$. The remaining $(n+1)$-products can be manipulated similarly via the triassociative axioms.
\end{proof}

\begin{thm}\label{perp implies others}
    Let $(T,\vdash,\dashv,\perp)$ be a triassociative algebra and suppose that the associative algebra $(T,\perp)$ is nilpotent of class $n$. Then the associative structures $(T,\vdash)$ and $(T,\dashv)$ are nilpotent of class less than or equal to $n+1$.
\end{thm}

\begin{proof}
    Suppose that any $(n+1)$-product under $\perp$ is zero, and consider $x = x_1\vdash x_2\vdash \cdots \vdash x_{n+2}$, an arbitrary $(n+2)$-product under the $\vdash$ structure. We compute \begin{align*}
        x &= x_1\vdash(x_2\vdash(x_3\vdash \cdots \vdash(x_{n+1}\vdash x_{n+2})\cdots )) \\ &= (\cdots ((x_1\perp x_2)\perp x_3)\perp \cdots \perp x_{n+1})\vdash x_{n+2}
    \end{align*} via repeated application of T1, which equals zero by assumption. By the above discussion, $(T,\dashv)$ must also be nilpotent. Alternatively, the nilpotency of $(T,\dashv)$ could be obtained under our current assumption via repeated application of T2.
\end{proof}

Remarkably, the converse of Theorem \ref{perp implies others} is not true. Indeed, consider the algebra $T_1^2$ from Theorem \ref{1-d classification}. The associative structures $(T_1^2,\vdash)$ and $(T_1^2,\dashv)$ are nilpotent (abelian, in fact), yet the algebra $(T_1^2,\perp)$ is not. We observe that $\perp$ interacts with $\vdash$ and $\dashv$ in a manner that is different to how the latter two interact with each other. This corresponds to the disparate effect of the vertical symmetry on the operations as it appears in the triassociative axioms, under which $\perp$ is preserved while $\vdash$ and $\dashv$ flip. Concerning nilpotency, the $\perp$ structure is more consequential than the other two.

We also note that the maximum bound of $n+1$ in the preceding theorem is attained in the cases of $T_2^2(1,0)$ from Theorem \ref{2-D nilp} and $(W,\vdash,\dashv,\perp)$ from Example \ref{extra special}. In both cases, the $\perp$ structure is nilpotent of class 1, while the $\vdash$ and $\dashv$ structures are nilpotent of class 2.

\begin{thm}\label{perp implies whole}
    A triassociative algebra $(T,\vdash,\dashv,\perp)$ is nilpotent if and only if the associative algebra $(T,\perp)$ is nilpotent.
\end{thm}

\begin{proof}
    The forward direction is trivial. To see the reverse direction, assume that $(T,\perp)$ is nilpotent of class $n$. By Theorem \ref{perp implies others}, we know that $(T,\vdash)$ and $(T,\dashv)$ are nilpotent of class less than or equal to $n+1$. Let $N = n^2 + 2n$. By Proposition \ref{trias product form}, we can write any $N$-product as \[(x_1\vdash x_2\vdash\cdots\vdash x_{n_1-1})\vdash(x_{n_1}\dashv \cdots\dashv x_{n_2-1})\perp (x_{n_2}\dashv \cdots\dashv x_{n_3-1})\perp\cdots\perp (x_{n_r}\dashv \cdots\dashv x_N)\] where $1\leq n_1\leq n_2\leq \cdots \leq n_r \leq N$. There is only one case where there is neither an $(n+2)$-product of $\vdash$ or $\dashv$, nor an $(n+1)$-product of $\perp$. It is when $n_1 = n+1$, $n_2 = 2(n+1)$, $\dots$, $n_r = n(n+1)$. Any $(N+1)$-product is, therefore, zero. Thus, $T$ is nilpotent of class less than or equal to $N$.
\end{proof}

One may also consider the notion of solvability in the triassociative setting. In particular, consider the series of ideals defined recursively by $T^{(1)} = T$ and $T^{(k+1)} = T^{(k)}\lozenge T^{(k)}$. We say that $T$ is \textit{solvable} if there is some $n$ for which $T^{(n)}=0$. If we assume that $T$ is a solvable triassociative algebra, then $(T,\perp)$ must be solvable as an associative algebra. Any solvable associative algebra, however, is nilpotent. By Theorem \ref{perp implies whole}, we obtain the following result.

\begin{cor}
    There is no solvable non-nilpotent triassociative algebra.
\end{cor}

Similarly to Corollary \ref{engels for di}, we use Theorem \ref{perp implies whole} to obtain a version of Engel's Theorem for finite-dimensional triassociative algebras. Here, $\lambda_t^{\perp}$ is the adjoint operator defined by $\lambda_t^{\perp}(x) = t\perp x$ for all $t,x\in T$.

\begin{cor}
    A finite-dimensional triassociative algebra $T$ is nilpotent if and only if $\lambda_t^{\perp}$ is a nilpotent endomorphism on $T$ for any $t\in T$.
\end{cor}

\end{document}